\pgfplotsset{compat=1.5}
\providecommand{\con}{\cellcolor{blue!15}}
\providecommand{\ctw}{\cellcolor{red!35}}
\providecommand{\cth}{\cellcolor{green!55}}
\newtheorem{thm}{Theorem}[section]
\newtheorem{cor}[thm]{Corollary}
\newtheorem{lem}[thm]{Lemma}
\theoremstyle{definition}
\numberwithin{equation}{section}
\def\eref#1{$(\ref{#1})$}
\def\sref#1{\S$\ref{#1}$}
\def\lref#1{Lemma~$\ref{#1}$}
\def\tref#1{Theorem~$\ref{#1}$}
\def\fref#1{Figure~$\ref{#1}$}
\renewcommand{\geq}{\geqslant}
\renewcommand{\leq}{\leqslant}
\renewcommand{\ge}{\geqslant}
\renewcommand{\le}{\leqslant}
\renewcommand{\emptyset}{\varnothing}
\renewcommand{\L}{\mathbf{L}}
\newcommand{\C}{\mathcal{C}}
\newcommand{\E}{\mathbb{E}}
\renewcommand{\epsilon}{\varepsilon}
\g@addto@macro\bfseries{\boldmath}
\title{Subsquares in random Latin rectangles}
\author{Jack Allsop \ \ Ian M. Wanless\\
	\small School of Mathematics\\[-0.5ex]
	\small Monash University\\[-0.5ex]
	\small Vic 3800, Australia\\
	\small\tt jack.allsop@monash.edu \ \ ian.wanless@monash.edu}
\date{}
\begin{document}
	
	\maketitle
	
	\begin{abstract}
	  Suppose that $k$ is a function of $n$ and $n\to\infty$.  We show
	  that with probability $1-O(1/n)$, a uniformly random $k\times n$
	  Latin rectangle contains no proper Latin subsquare of order $4$ or
	  more, proving a conjecture of Divoux, Kelly, Kennedy and Sidhu. We
	  also show that the expected number of subsquares of order 3 is
	  bounded and find that the expected number of subsquares of order
          2 is $\binom k2(1/2+o(1))$ for all $k\le n$.
	\end{abstract}
	
	\section{Introduction}\label{s:intro}
	
	Throughout this paper, $n$ and $k$ will denote positive integers with $k \leq n$ and all asymptotics are as $n \to \infty$, with $k$ some function of $n$. All probability distributions will be discrete and uniform, with $\Pr(\cdot)$ denoting probability.
	
	A $k \times n$ \emph{Latin rectangle} $L$ is a $k \times n$ matrix on $n$ symbols, each of which occurs at most once in each row and column. If $k=n$ then $L$ is a \emph{Latin square} of order $n$. A \emph{subsquare} of $L$ is a submatrix of $L$ which is itself a Latin square. Every $k \times n$ Latin rectangle has $kn$ subsquares of order $1$, and every Latin square of order $n$ has a single subsquare of order $n$. A \emph{proper} subsquare of $L$ is a subsquare which has order in the set $\{2, 3, \ldots, n-1\}$. The order of any proper subsquare of $L$ is at most $\min\{k, n/2\}$. An \emph{intercalate} is a subsquare of order $2$. 
	
	Let $m$ be an integer satisfying $2 \leq m \leq n/2$ and
	define $\E_m(k,n)$ to be the expected number of subsquares of
	order $m$ in a random $k \times n$ Latin rectangle.  The main
	result of this paper is the following theorem.
	
	\begin{thm}\label{t:main}
		If $m,k$ are integer functions of $n$ satisfying $4\le
		m\le\min\{k,n/2\}$ then $\E_m(k,n)=O(n^{-2})$ as $n\to\infty$.
	\end{thm}
	
	We also show that $\E_3(k,n)=O(k/n)$. An immediate corollary of
	\tref{t:main} is the following result, proving a conjecture of
	Divoux, Kelly, Kennedy and Sidhu~\cite{subsqrandom}, which generalises
	a conjecture by McKay and Wanless~\cite{manysubsq}.
	
	\begin{cor}\label{c:main}
		With probability $1-O(1/n)$, a random $k \times n$ Latin rectangle has
		no proper subsquare of order $4$ or more.
	\end{cor}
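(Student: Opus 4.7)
The plan is a one-step deduction via Markov's inequality. Let $X$ denote the number of proper subsquares of order at least $4$ in a uniformly random Latin square of order $n$. Since the order of any proper subsquare is at most $n/2$, we have
\[
X = \sum_{m=4}^{\lfloor n/2 \rfloor} X_m,
\]
where $X_m$ counts the subsquares of order exactly $m$. Taking expectations and applying \tref{t:main},
\[
\E[X] = \sum_{m=4}^{\lfloor n/2 \rfloor} \E_m(n) = \sum_{m=4}^{\lfloor n/2 \rfloor} O(n^{-3/2}) = O(n^{-1/2}),
\]
where we use that the implicit constant in \tref{t:main} is uniform in $m$ and that there are at most $n/2$ terms in the sum.

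By Markov's inequality, $\Pr(X \ge 1) \le \E[X] = O(n^{-1/2}) = o(1)$, so with probability $1 - o(1)$ the random Latin square contains no proper subsquare of order $4$ or more.

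The only subtlety worth noting is that the bound in \tref{t:main} must hold uniformly over $m$ in the stated range; this is essential because we telescope over $\Theta(n)$ values of $m$. No additional obstacle arises — the corollary is purely a linearity-of-expectation plus Markov argument from the theorem.
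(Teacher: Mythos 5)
Your argument is exactly the intended one: the paper states the corollary as "immediate" from \tref{t:main}, and summing $\E_m(n)=O(n^{-3/2})$ over the at most $n/2$ relevant values of $m$ followed by Markov's inequality is the standard way to make that immediacy precise. Your remark about uniformity of the implicit constant is well-placed and is indeed what the paper's proof of \tref{t:main} delivers (in fact $O(n^{-2})$ for $m\ge 6$ and $O(n^{-3/2})$ for $m\in\{4,5\}$).
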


	The structure of this paper is as follows. In \sref{s:back} we discuss
	some of the literature regarding subsquares in random Latin squares
	and rectangles, as well as giving some definitions which we require to
	prove \tref{t:main}. We prove \tref{t:main} in \sref{s:main}, and in
	\sref{s:subsq3} we show that $\E_3(k, n)=O(k/n)$ and
        $\mathbb{E}_2(k,n)=(1/2+o(1))\binom{k}{2}$ for all $k\le n$.
	
	\section{Background}\label{s:back}
	
	A seminal paper by Godsil and McKay \cite{GM91} used switching
	to count substructures in random Latin rectangles.  A special
	case of their Theorem 4.7 is that if $k \leq n/6$ then the
	probability that a random $k\times n$ Latin rectangle contains
	a specific subsquare of order $m$ is $\exp(-m^2\log
	n+O(m^2k/n))$.  There are less than
	$\binom{k}{m}\binom{n}{m}^2\le(en/m)^{3m}$ ways to choose the
	rows, columns and symbols for a potential subsquare of order
	$m$ and less than $m^{m^2}$ Latin squares with those rows,
	columns and symbols. It follows that if $k \leq n/6$ then
	\[\E_m(k,n)\le\exp((m^2-3m)\log(m/n)+O(m)+O(m^2k/n))\]
	and, using precise values rather than approximations,
	\begin{equation}\label{e:E2smallk}
		\E_2(k,n)=\frac12\binom{k}2\exp(O(k/n)).
	\end{equation}
	In particular, if $k=o(n)$ then a random $k\times n$ Latin rectangle
	almost surely has no subsquare of order 4 or larger.

	The first researchers to study subsquares of random Latin
	squares were McKay and Wanless~\cite{manysubsq}. They provided
	some estimates for the number of intercalates and conjectured that
	asymptotically almost surely there will be no proper subsquare
	of order $4$ or more. They also conjectured that
	$\E_3(n,n)=1/18+o(1)$. They proved that with probability
	$1-o(1)$, a random Latin square of order $n$ has no subsquare
	of order $n/2$.  Estimates of the number of intercalates were
	subsequently improved in \cite{cycstrucrandom,KSS,KSSS,KS},
	leading to the following result.
	
	\begin{thm}\label{t:intercs}
	  Let $\mathbf{N}$ denote the number of intercalates in a random Latin
	  square of order $n$.
	  \begin{itemize}
	  \item The expected value of $\mathbf{N}$ is $(1+o(1))n^2/4$,
	  \item $\Pr(\mathbf{N} \leq (1-\delta)n^2/4) = \exp(-\Theta(n^2))$ for every $\delta \in (0, 1]$,
	  \item $\Pr(\mathbf{N} \geq (1+\delta)n^2/4) = \exp(-\Theta(n^{4/3}\log n))$ for every $\delta>0$.
	  \end{itemize} 
	\end{thm}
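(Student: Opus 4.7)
The theorem has three parts, each of which requires a distinct technique; I sketch a strategy for each.

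\textbf{Expected value.} By linearity of expectation and the row/column/symbol symmetry of the uniform measure on Latin squares,
\[
\mathbb{E}[\mathbf{N}] \;=\; \binom{n}{2}^{3}\,\Pr\bigl(\text{a fixed pair of 2 rows, 2 columns and 2 symbols forms an intercalate}\bigr).
\]
The probability on the right equals the number of Latin squares of order $n$ containing a specified intercalate, divided by $L(n)$. An \emph{intercalate swap} argument pairs each Latin square containing the specified intercalate with the Latin square obtained by flipping it; after correcting for the over-counting caused by many simultaneous flips, the probability works out to $2(1+o(1))/n^4$, giving $\mathbb{E}[\mathbf{N}] = (1+o(1))n^2/4$.

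\textbf{Lower tail.} The rate $\exp(-\Theta(n^2))$ in $\Pr(\mathbf{N}\le(1-\delta)n^2/4)$ is of Gaussian/subgaussian type, and the variance of $\mathbf{N}$ is known to be $\Theta(n^2)$, so the exponent $n^2$ is exactly what one expects from a deviation equal to a constant fraction of the mean. I would try either a bounded-differences Azuma/Talagrand concentration inequality, revealing the Latin square row by row (so that the bounded-differences constant is $O(n)$ per row), or a switching argument that shows any Latin square with anomalously few intercalates admits many local modifications producing more intercalates; the resulting many-to-one map between rare and typical Latin squares yields the stated entropy bound. The matching lower bound for the probability of the rare event would come from exhibiting an explicit family of Latin squares with atypically few intercalates of size at least $\exp(-O(n^2))L(n)$.

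\textbf{Upper tail and main obstacle.} The upper tail rate $\exp(-\Theta(n^{4/3}\log n))$ is strictly smaller than $n^2$, which is a hallmark of \emph{structured} large deviations and mirrors the upper tail for triangle counts in $G(n,p)$. Heuristically one expects the upper tail event to be dominated by Latin squares whose intercalates cluster into a subregion of size roughly $n^{2/3}\times n^{2/3}$: counting such clustered configurations should recover the $n^{4/3}\log n$ exponent. Pinning down this non-standard rate---identifying the correct extremal configuration and showing matching upper and lower bounds---is by a considerable margin the hardest part of the theorem, and is the main obstacle. Unlike the first two parts, a routine concentration or swap argument is insufficient, and one is forced into a delicate structural analysis akin to the polynomial/entropy method used for upper tails in sparse random graphs.
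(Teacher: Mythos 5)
This theorem is not proved in the paper at all; it is stated in \sref{s:back} purely as a summary of known results, with attribution to the cited works (McKay--Wanless for the expectation, and the Kwan--Sah--Sawhney(--Simkin) line of work for the two tail bounds). So there is no ``paper's own proof'' against which to compare your sketch, and you should not have attempted a from-scratch proof here.

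That said, a few remarks on your sketch. The expectation calculation is essentially the McKay--Wanless argument, and your linearity-of-expectation setup with per-configuration probability $2(1+o(1))/n^4$ is the right shape, though you gloss over the fact that the $o(1)$ error term requires a nontrivial switching estimate rather than just an ``intercalate-swap correction.'' For the lower tail, a naive row-by-row bounded-differences martingale will not deliver $\exp(-\Theta(n^2))$: changing one row can alter $\Theta(n)$ intercalates, which gives a much weaker exponent, and more fundamentally the uniform Latin square measure does not factor nicely over row exposures. The actual proofs go through global switching or entropy/absorption arguments that are far more involved than Azuma. For the upper tail, your heuristic of planting an intercalate-rich block of side $\sim n^{2/3}$ is the right extremal picture (indeed $k^3\asymp n^2$ forces $k\asymp n^{2/3}$, with entropy cost $k^2\log n\asymp n^{4/3}\log n$), and you correctly flag this as the hardest part; but ``a delicate structural analysis akin to the polynomial/entropy method'' is a description of the difficulty rather than a proof. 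In short: your intuitions are broadly sound, but none of the three parts is close to a proof, and in any case none of this is what the paper does---the paper simply cites these results and uses them as background.
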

	
	After two decades of no progress on the McKay-Wanless
        conjectures, two research groups recently managed to settle
        the case of large subsquares. Divoux, Kelly, Kennedy and
        Sidhu~\cite{subsqrandom} proved that there is a constant $K>0$
        such that asymptotically almost all Latin squares of order $n$
        contain no proper subsquare of order $K(n\log n)^{1/2}$ or
        more.  They also showed that for $k\le(1/2-o(1))n$, a random
        $k\times n$ Latin rectangle contains no proper subsquare of
        order $4$ or more with probability $1-o(1)$, and that
        $\E_3(k,n)=(1+o(1))\binom{k}{3}/(3n^3)$ and $\E_2(k,n)=\binom
        k2(1/2+o(1))$. They conjectured that these results hold for
        all $k\le n$, generalising the conjectures made by McKay and
        Wanless \cite{manysubsq}.
	
	Independently, Gill, Mammoliti and Wanless~\cite{canonicallabel}
	showed that for any $\epsilon>0$, asymptotically almost all Latin
	squares contain no proper subsquare of order
	$n^{1/2}\log^{1/2+\epsilon} n$ or more. They also proved that their
	result implies that isomorphism for Latin squares can be tested in
	average case polynomial time.

	\medskip
	
	We now give some definitions which we will need to prove
	\tref{t:main}. Let $u$ and $v$ be positive integers. We denote the set
	$\{1, 2, \ldots, u\}$ by $[u]$. Let $A$ be a $u \times v$ matrix. We
	will index the rows of $A$ by $[u]$ and the columns of $A$ by
	$[v]$. This convention will be adopted throughout this paper, unless
	stated otherwise. A pair $(i, j) \in [u] \times [v]$ is a \emph{cell}
	of $A$ and a triple $(i, j, A_{i, j})$ is an \emph{entry} of $A$. Let
	$I \subseteq [u]$ and $J \subseteq [v]$. We denote by $A[I,J]$ the
	submatrix of $A$ induced by the rows in $I$ and the columns in $J$.
	
	A $k \times n$ \emph{partial Latin rectangle} $P$ is a $k \times n$
	matrix such that each cell is either empty or contains one of $n$
	symbols, such that no symbol occurs more than once in each row and
	column.
	A \emph{subrectangle} of $P$ is a submatrix of $P$ which is itself a
	Latin rectangle. We will equivalently view $P$ as the set of triples
	$(r,c, s)$ where $(r,c)$ is a non-empty cell of $P$ and $s =
	P_{r,c}$. This allows us to use set notation such as $(1, 2, 3) \in
	P$, which means that $P_{1, 2}=3$. Throughout this paper, unless
	otherwise stated, a $k \times n$ partial Latin rectangle will have
	symbol set $[n]$. The convention that a Latin square of order $n$ has
	row indices, column indices and symbol set $[n]$ will be broken when
	dealing with subsquares.
	
	We will prove \tref{t:main} using the switching method. This method is behind many of the results on random Latin squares and rectangles including \cite{cycstrucrandom, subsqrandom, canonicallabel, GM91, KSS, KSSS, KS, manysubsq} and has been widely used for many other combinatorial structures. Our switching procedure will be cycle switching and incomplete cycle switching, which we define now. 
	
	Let $P$ be a $k \times n$ partial Latin rectangle. A \emph{row cycle} of $P$ is a $2 \times \ell$ subrectangle, for some $\ell \leq n$, which is minimal in the sense that it does not contain any $2 \times \ell'$ subrectangle with $\ell' < \ell$. Let $Q$ be a row cycle of $P$ which hits rows $i$, $j$ and the columns in $C$, for some $\{i, j\} \subseteq [k]$ and $C \subseteq [n]$. Since $Q$ is uniquely determined by rows $i$, $j$ and a single column in $C$, we denote $Q$ by $\rho(i, j, c)$, where $c$ is any element of $C$.	
	Row cycles give us a way of perturbing partial Latin rectangles to create new ones. We can define a partial Latin rectangle $P'$ by
	\[
	P'_{x, y} = \begin{cases}
		P_{i, y} & \text{if } x=j \text{ and } y \in C, \\
		P_{j, y} & \text{if } x=i \text{ and } y \in C, \\
		P_{x, y} & \text{otherwise}.
	\end{cases}
	\]
	We say that $P'$ has been obtained from $P$ by switching on $\rho(i, j, c)$. 
	
	A \emph{column cycle of length $\ell$} is an $\ell \times 2$ submatrix
	of $P$ that contains exactly $\ell$ symbols and no empty cells and
	does not contain any smaller submatrix with these properties. The
	entries in a column cycle form a row cycle when $P$ is transposed.
	Suppose that $\C$ is an $\ell \times 2$ submatrix of $P$ that does not
	contain a column cycle or any empty cells and which hits columns $i$
	and $j$ and rows $R$ of $P$. Then $\C$ is an \emph{incomplete column
		cycle of length $\ell$} if there are unique rows $r,r'\in R$ such
	that $P_{r,i}$ does not occur in column $j$ of $P$ and $P_{r',j}$ does
	not appear in column $i$ of $P$.
	See \fref{f:colpath} for an example of an incomplete column cycle.
	
	\begin{figure}
		\[\left[
		\begin{matrix}
			2&\cellcolor{blue!15}3&5&6&\cellcolor{blue!15}1&4&7\\
			5&\cellcolor{blue!15}4&7&2&\cellcolor{blue!15}3&6&1\\
			4&5&1&3&7&2&6\\
			6&\cellcolor{blue!15}1&3&4&\cellcolor{blue!15}2&7&5	
		\end{matrix}
		\right]
		\]
		\caption{\label{f:colpath}An incomplete column cycle in a $4 \times 7$
			Latin rectangle}
	\end{figure}
	
	Let $\{i, j\} \subseteq [n]$ with $i \neq j$ and let $r\in[k]$. There
	is either a unique column cycle or a unique incomplete column cycle
	(but not both) which hits columns $i$ and $j$ and row $r$. Denote this
	column cycle or incomplete column cycle by $\sigma(i, j, r)$, and let
	$R$ be the set of rows which it hits. The substructure $\sigma(i,j,r)$
	gives us a way of perturbing $P$ to create a new Latin
	rectangle. Define a partial Latin rectangle $P'$ by
	\[
	P'_{x, y} = \begin{cases}
		P_{x, i} & \text{if } x \in R \text{ and } y = j, \\
		P_{x, j} & \text{if } x \in R \text{ and } y = i, \\
		P_{x, y} & \text{otherwise}.
	\end{cases}
	\]
	We say that $P'$ has been obtained from $P$ by switching on $\sigma(i, j, r)$.
	
	There are also symbol cycles and incomplete symbol cycles, which are,
	respectively, the image of column cycles and incomplete column cycles
	under the map which replaces each entry $(r,c,s)$ by the entry
	$(r,s,c)$.  Symbol cycles and incomplete symbol cycles can also be
	switched in an analogous way.  See~\cite{cycswitch} for a study of
	switching on row, column and symbol cycles.

	Let $L$ be a $k \times n$ Latin rectangle. Let $\{i, j\} \subseteq [k]$ with $i \neq j$. The permutation mapping row $i$ of $L$ to row $j$, denoted by $\tau_{i, j}$, is defined by $\tau_{i, j}(L_{i, \ell}) = L_{j, \ell}$ for every $\ell \in [n]$. Such permutations are called \emph{row permutations} of $L$. Let $\rho$ be a cycle in $\tau_{i, j}$ and in row $i$ let $C$ be the set of columns containing the symbols involved in $\rho$. Then the set of entries 
	\[
	\{(i, c, L_{i, c}), (j, c, L_{j, c}) : c \in C\}
	\]
	is the row cycle $\rho(i, j, c)$ of $L$, where $c$ is any element of $C$. Conversely, every row cycle of $L$ corresponds to a cycle of a row permutation of $L$. 
	
	Now suppose that $\{i, j\} \subseteq [n]$ with $i \neq j$. The partial
	permutation mapping column $i$ to column $j$, denoted by $\kappa_{i,j}$,
	is defined by $\kappa_{i, j}(L_{\ell, i}) = L_{\ell, j}$ for
	every $\ell \in [k]$. A cycle in $\kappa_{i, j}$ corresponds to a
	column cycle of $L$ hitting columns $i$ and $j$. Say that a list
	$[x_1, x_2, \ldots, x_u]$ is an \emph{incomplete cycle} of $\kappa_{i,j}$
	if $\kappa_{i, j}(x_i) = x_{i+1}$ for every $i \in [u-1]$ and
	$\kappa_{i, j}(x_u)$ and $\kappa_{i, j}^{-1}(x_1)$ are undefined. An
	incomplete cycle of $\kappa_{i, j}$ corresponds to an incomplete
	column cycle of $L$ hitting columns $i$ and $j$.
	
	\section{Proof of main theorem}\label{s:main}
	
	Throughout this section $m = m(n)$ will be a positive integer
	satisfying $2 \leq m \leq \min\{k, n/2\}$.
	This section is split into three subsections. In \sref{ss:subsqbound}
	and \sref{ss:subsq4bound} we prove two different bounds on the
	probability that a random $k \times n$ Latin rectangle contains a
	subsquare of order $m$ in a specific selection of rows and columns,
	and on a specific symbol set. These bounds are effective for different
	but overlapping ranges of $m$. In \sref{ss:combine} we combine them to prove \tref{t:main}, using the bound from \sref{ss:subsqbound} for $m\ge6$ and the
	bound from \sref{ss:subsq4bound} for $m\in\{4,5\}$.
	
	\subsection{Bounding probability of large subsquares}\label{ss:subsqbound}
	
	The main result of this subsection is the following theorem, which
	gives a nontrivial bound for all~$m$. However, for $m=4$ it is not
	strong enough to imply any result in the direction of
	\tref{t:main} and for $m=5$ it is not enough to imply the full strength of
	\tref{t:main}.  The bound we prove in the next subsection will be
	better whenever $2<m=O(1)$.
	
	\begin{thm}\label{t:subsqbound}
		Let $R \subseteq [k]$ be of cardinality $m$, let $C$ and $S$
		be subsets of $[n]$ of cardinality $m$, and let $\L$ be a
		random $k \times n$ Latin rectangle. The probability that
		$\L[R,C]$ is a Latin square of order $m$ with symbol set $S$
		is at most
		\begin{equation*}\label{e:ub2}
			\frac{kn(k+1-m)(n+1-m)}{m^2\binom{n}{m}^3\binom{k}{m}}.
		\end{equation*}
	\end{thm}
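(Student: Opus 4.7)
The plan is to apply the cycle-switching method. Let $\mathcal{A}$ denote the set of Latin squares $L$ of order $n$ with $L[R,C]$ a Latin square on $S$; the probability in question equals $|\mathcal{A}|/L(n)$, where $L(n)$ is the number of Latin squares of order $n$. I would first extract structural consequences of $L \in \mathcal{A}$: because each row $L[r,\cdot]$ with $r \in R$ is a permutation of $[n]$ sending $C$ onto $S$, it must send $[n] \setminus C$ onto $[n] \setminus S$, so $L[R, [n]\setminus C]$ is an $m \times (n-m)$ Latin rectangle on symbol set $[n] \setminus S$; symmetrically, $L[[n]\setminus R, C]$ is an $(n-m) \times m$ Latin rectangle on $[n] \setminus S$. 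Consequently, for any $i \in R$ and $j \in [n]\setminus R$, the row permutation $\tau_{i,j}$ swaps $S$ with $[n]\setminus S$, so every cycle of $\tau_{i,j}$ has even length, and its columns alternate between $C$ and $[n]\setminus C$.

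Next, I would set up the switch. From $L \in \mathcal{A}$, forward switches are indexed by triples $(i,j,\rho)$ with $i \in R$, $j \in [n]\setminus R$, and $\rho$ a cycle of $\tau_{i,j}$. Switching on the corresponding row cycle modifies the entries of row $i$ at the columns of $\rho$; because those columns alternate between $C$ and $[n]\setminus C$, the resulting Latin square $L'$ has a symbol from $[n]\setminus S$ in row $i$ at some column of $C$, so $L' \notin \mathcal{A}$. In the reverse direction, switching an arbitrary Latin square $L'$ on some triple $(i,j,\rho)$ produces a Latin square $L$, but $L \in \mathcal{A}$ only if the switch restores the full subsquare structure $L[R,C]$ on $S$, which imposes strong constraints on $(L', i, j, \rho)$.

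Finally, balancing the forward and backward switching counts would yield the bound. Writing the target as $\left(n/(\binom{n}{m}\binom{n}{m-1})\right)^2$, the squared form suggests that the proof decomposes into two symmetric pieces---plausibly one exploiting a row symmetry and another a column symmetry of Latin squares---each contributing the factor $n/(\binom{n}{m}\binom{n}{m-1})$.

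\textbf{Main obstacle.} The delicate step is the backward count. A naive bound on backward switches (all pairs $(i,j)$ times all cycles) only yields $\Pr \leq 1$, which is trivial. To extract the correct powers of $\binom{n}{m}$ and $\binom{n}{m-1}$, one must exploit the rigidity that for $L \in \mathcal{A}$ to arise from $L'$ by a single switch, at least $m-1$ of the rows in $R$ must already satisfy $L'[r, C] = S$ as a set, and the switched cycle must convert the remaining row to also satisfy this condition. Quantifying these constraints carefully, in a way that combines cleanly with the symmetric second switching step, is the main technical challenge.
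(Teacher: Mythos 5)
Your approach is a single-shot switching between $\mathcal{A}$ (all Latin squares with the subsquare) and its complement, but this cannot reach the paper's bound, and a key structural claim you rely on is false.

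First, the structural claim. You assert that for $i \in R$ and $j \in [n]\setminus R$, the row permutation $\tau_{i,j}$ swaps $S$ with $[n]\setminus S$, so every cycle has even length and its columns alternate between $C$ and $[n]\setminus C$. This is only true when $m = n/2$. For general $m$, the subsquare hypothesis yields $\tau_{i,j}(S) \subseteq [n]\setminus S$ (symbols of $S$ in row $i$ occur in columns $C$, where row $j$ has symbols outside $S$), but it gives no control on $\tau_{i,j}([n]\setminus S)$: an element of $[n]\setminus S$ can be sent to $S$ \emph{or} to $[n]\setminus S$, because $L_{j,c'}$ for $c' \notin C$, $j \notin R$ is unconstrained. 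So cycles of $\tau_{i,j}$ need not have even length, need not alternate, and need not hit any column of $C$. Your forward count thus already needs repair (you must restrict to cycles hitting $C$), and the alternation you wanted to exploit is gone.

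Second, and more fundamentally, the balancing step cannot work in a single switching layer. Any one switch relates $|\mathcal{A}|$ to $|\mathcal{A}^c|$ via a bipartite counting argument whose per-vertex degrees are at most polynomial in $n$ (at most $n$ row-cycle switches out of a square, and at most a few relevant back-switches). So a single layer can only establish $|\mathcal{A}|/|\mathcal{A}^c| = O(\mathrm{poly}(n))$, which is vacuous here since the target is $\binom{n}{m}^{-4} \cdot n^{O(1)}$, decaying like $n^{-\Theta(m^2)}$ in the interesting regime. You gesture at the squared form $\bigl(n/(\binom{n}{m}\binom{n}{m-1})\bigr)^2$ to suggest a two-stage decomposition, but even two stages only give a $\mathrm{poly}(n)$ ratio. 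You correctly identify the backward count as the delicate step, but it is not merely a quantitative challenge: the one-shot framework cannot produce the needed super-polynomial decay. The paper instead builds a chain of $m^2$ nested events $\Delta_0 \supseteq \Delta_1 \supseteq \cdots \supseteq \Delta_{m^2}$ by revealing cells one at a time, and bounds each consecutive ratio $\Pr(\L \in \Delta_i \mid \L \in \Delta_{i-1})$ by a \emph{local} switching argument (only when the revealed cell lies in a boundary row or column of $[m]^2$ does the bound beat the trivial $1$, and those roughly $4m$ boundary steps contribute the four factors of $\binom{n}{m}$). To make progress along your lines, you would need to introduce a comparable filtration and bound each increment separately.
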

	
	Without loss of generality we may assume that $R = C = S = [m]$ in \tref{t:subsqbound}. Let $T_0=\emptyset$ and for $i \in [m^2]$ define $T_i$ to be the set of all pairs $(r,c)\in R\times C$ such that $(c-1)m+r \leq i$.
	We will consider the sets $T_i$ to be sets of cells of $k \times n$ Latin rectangles. So if $a \in \{0, 1, \ldots, m-1\}$, $b \in [m]$ and $L$ is a $k \times n$ Latin rectangle then $T_{am+b}$ consists of all cells of $L$ in the first $m$ rows and $a$ columns, or in the first $b$ rows and the $(a+1)$-st column. Define $t_i$ to be the cell in $T_i \setminus T_{i-1}$. Define $\Delta_i$ to be the set of $k \times n$ Latin rectangles such that the symbol in cell $(r, c)$ is an element of $[m]$, for every $(r, c) \in T_i$. Note that $\Delta_0$ is the set of all $k \times n$ Latin rectangles. 
	
	Let $\L$ be a random $k \times n$ Latin rectangle. The probability that $\L[[m], [m]]$ is a Latin square of order $m$ with symbol set $[m]$ is $\Pr(\L \in \Delta_{m^2})$. By the chain rule of probability,
	\begin{equation}\label{e:cr}
		\Pr(\L \in \Delta_{m^2}) = \prod_{i=1}^{m^2} \Pr(\L \in \Delta_i \mid  \L \in \Delta_{i-1}).
	\end{equation}
	Similar to the approach of Divoux, Kelly, Kennedy and Sidhu~\cite{subsqrandom}, our approach to proving \tref{t:subsqbound} is to bound the terms 
	\begin{equation}\label{e:term}
		\Pr(\L \in \Delta_i \mid  \L \in \Delta_{i-1}).
	\end{equation}
	However we will only provide a non-trivial bound on \eref{e:term} when the cell $t_i = (r, c)$ satisfies $\{1, m\} \cap \{r, c\} \neq \emptyset$. That is, we only bound \eref{e:term} when $t_i$ lies in the first row, first column, $m$-th row or $m$-th column. Slightly surprisingly, this turns out to be enough to derive our result.  Consider when $k=4$, $n=6$, $m=3$ and $i=5$. The Latin rectangle
	\[
	L = \left[\;
	\begin{matrix}		
		\con1&\con2&6&5&4&3 \\
		\con2&\con3&5&4&6&1 \\
		\con3&4&1&2&5&6 \\
		4&6&2&1&3&5 \\
	\end{matrix}\;
	\right]
	\]
	is a member of $\Delta_i$, due to the highlighted entries. Suppose that we want to use column cycle or incomplete column cycle switching to get from $L$ to a rectangle in $\Delta_{i-1} \setminus \Delta_i$. Clearly we must switch on $\sigma(2, c, 2)$ for some $c \in \{3, 4, 5, 6\}$. If we take $c=5$ it is clear that the resulting rectangle will lie in $\Delta_{i-1} \setminus \Delta_i$, as desired. However, if we take $c=4$ the resulting rectangle will not be a member of $\Delta_{i-1}$, since cell $(1, 2)$ will no longer contain a symbol in $[3]$. So to estimate the number of switches from some rectangle $L' \in \Delta_i$ to a rectangle in $\Delta_{i-1} \setminus \Delta_i$, we must estimate the number of columns $c$ of $L'$ for which $\sigma(2, c, 2)$ does not hit row $1$. When dealing with general $n$, $m$ and $i$ with $t_i = (r, c)$, to determine the number of switches from a rectangle $L' \in \Delta_i$ to a rectangle in $\Delta_{i-1} \setminus \Delta_i$, we will estimate the number of columns $c'$ of $L'$ for which $\sigma(c,c',r)$ does not hit any row $r'<r$. This seems like a difficult task in general. However, when $r=1$ this difficulty does not arise, and it is clear that we can switch $L'$ on $\sigma(c, c', 1)$ for any choice of $c' \in [n] \setminus [c]$ such that $L'_{r, c'} \notin [m]$ and obtain a rectangle in $\Delta_{i-1} \setminus \Delta_i$. This explains why we can provide non-trivial bounds on \eref{e:term} when $t_i$ is in the first row. By using symbol cycle and incomplete symbol cycle switching instead of column cycle and incomplete column cycle switching and applying the same logic, we see why we can obtain non-trivial bounds on \eref{e:term} when $t_i$ is in the first column. Note that we could also use row cycle switching to achieve this, but we get stronger bounds when we use symbol cycle and incomplete symbol cycle switching. 
	When $t_i = (r, m)$ is in the $m$-th column and $c \in [n] \setminus [m]$ we can estimate the number of rectangles in $\Delta_i$ for which $\sigma(m, c, r)$ does not hit any row $r' < r$ (see \lref{l:prelim2}). We can obtain similar results for when $t_i$ is in the $m$-th row. This allows us to provide non-trivial bounds on \eref{e:term} in these cases.
	
	\medskip
	
	The following lemma allows us to bound \eref{e:term} when $t_i$ is in
	the first row or column.
	
	\begin{lem}\label{l:setnrc}
		Let $i\in\{m+1-\alpha,m^2+1-\alpha m\}$ for some $\alpha\in[m]$.
		For a random $k \times n$ Latin rectangle $\L$,
		\[
		\Pr(\L \in \Delta_i \mid  \L \in \Delta_{i-1})=\frac{\alpha}{n+\alpha-m}.
		\]
	\end{lem}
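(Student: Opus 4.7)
The plan is a short exchangeability argument: I will observe that in both cases $t_i$ lies on the first row or first column of the $m\times m$ block -- specifically $t_i=(1,m+1-\alpha)$ when $i=m+1-\alpha$, and $t_i=(m-\alpha+1,1)$ when $i=m^2+1-\alpha m$ -- and then use a row or column relabelling to obtain exchangeability of the relevant cells.

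For $i=m+1-\alpha$, I will note that the event $\L\in\Delta_{i-1}$ constrains only the $m-\alpha$ cells $(1,1),\ldots,(1,m-\alpha)$, each of which must hold a symbol of $[m]$. The group of permutations of the column labels $\{m+1-\alpha,\ldots,n\}$ acts on Latin squares of order $n$ as a measure-preserving bijection which fixes $\Delta_{i-1}$ setwise, because no constrained cell lies in those columns. Hence, conditional on $\L\in\Delta_{i-1}$, the symbols of row $1$ at columns $m+1-\alpha,\ldots,n$ are exchangeable. On this event the cells $(1,1),\ldots,(1,m-\alpha)$ already contain $m-\alpha$ distinct members of $[m]$, so the remaining $\alpha$ members of $[m]$ appear in row $1$ among the $n+\alpha-m$ exchangeable cells. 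Exchangeability will therefore yield $\Pr(\L\in\Delta_i\mid\L\in\Delta_{i-1})=\alpha/(n+\alpha-m)$.

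The case $i=m^2+1-\alpha m$ is symmetric with rows and columns swapped. Here $\Delta_{i-1}$ forces the top-left $(m-\alpha)\times m$ block to be a Latin rectangle on symbol set $[m]$, so in particular column $1$ already contains $m-\alpha$ distinct symbols of $[m]$ in rows $1,\ldots,m-\alpha$. Permuting the rows indexed by $\{m-\alpha+1,\ldots,n\}$ is a measure-preserving bijection on Latin squares that fixes $\Delta_{i-1}$ setwise, so the symbols in column $1$ in those rows are exchangeable. Exactly $\alpha$ of these $n+\alpha-m$ cells must contain a symbol of $[m]$, namely the $\alpha$ members of $[m]$ missing from the top of column $1$, and exchangeability again gives $\alpha/(n+\alpha-m)$.

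There is no serious obstacle here; the whole argument is symmetry plus a pigeonhole count. The one step requiring real attention is the verification that the chosen column set (in the first case) or row set (in the second case) is disjoint from every cell constrained by $\Delta_{i-1}$, and it is precisely this that restricts the lemma to the listed values of $i$: those are exactly the indices for which $t_i$ is the first cell encountered in its column (first case) or its row (second case) under the ordering used to define the $T_i$.
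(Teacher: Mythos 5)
Your proposal is correct but follows a genuinely different route from the paper. The paper proves this lemma with the same cycle-switching machinery it uses throughout Section~3: for $i=m^2+1-\alpha m$, each $L\in\Delta_i$ yields exactly $n-m$ row-cycle switches on $\rho(m+1-\alpha,r',1)$ landing in $\Delta_{i-1}\setminus\Delta_i$, while each $L'\in\Delta_{i-1}\setminus\Delta_i$ yields exactly $\alpha$ such switches landing back in $\Delta_i$, so $(n-m)|\Delta_i|=\alpha|\Delta_{i-1}\setminus\Delta_i|$ and the conditional probability follows; the case $i=m+1-\alpha$ is handled by column-cycle switching. You instead observe that because the constraint cells of $T_{i-1}$ are confined to a single row (resp.\ column), permuting the unconstrained columns (resp.\ rows) is a measure-preserving bijection on Latin squares that fixes $\Delta_{i-1}$ setwise, so the $n+\alpha-m$ cells in question are exchangeable; exactly $\alpha$ of them must hold a symbol of $[m]$, giving $\alpha/(n+\alpha-m)$ directly. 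Your identification of $t_i$ as $(1,m+1-\alpha)$ and $(m-\alpha+1,1)$ in the two cases is correct, as is the observation that these are precisely the indices for which the constrained cells avoid the orbit. The exchangeability argument is shorter and arguably more transparent for this particular lemma; what it does not do is extend to Lemmas~\ref{l:setrowcycs} and~\ref{l:setcolcycs}, where $T_{i-1}$ already meets all $m$ relevant rows and columns and no nontrivial row or column permutation fixes $\Delta_{i-1}$, which is presumably why the paper opts for switching uniformly across all four boundary lemmas.
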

	\begin{proof}
		First suppose that $i = m^2+1-\alpha m$ for some $\alpha \in [m]$. We use column cycle and incomplete column cycle switching to find the relative sizes of $\Delta_i$ and $\Delta_{i-1}$. Let $L \in \Delta_i$. Let $c' \in [n] \setminus [m-\alpha+1]$ be such that $L_{1, c'} \not\in [m]$. Note that there are exactly $n-m$ choices for $c'$. Let $L'$ be obtained from $L$ by switching on $\sigma(m-\alpha+1, c', 1)$. Then $L'_{1, m-\alpha+1} \not\in [m]$ and $L'_{r, c} = L_{r, c}$ for every cell $(r, c) \in T_{i-1}$. Thus $L' \in \Delta_{i-1} \setminus \Delta_i$. It follows that there are exactly $(n-m)|\Delta_i|$ switches from a rectangle in $\Delta_i$ to a rectangle in $\Delta_{i-1} \setminus \Delta_i$.
		
		Reversing the switching process, consider $L'\in \Delta_{i-1} \setminus \Delta_i$ that is obtained from some $L\in\Delta_i$ by switching on some $\sigma(m-\alpha+1, c', 1)$. Here $c'$ must be a column in $[n]\setminus [m-\alpha+1]$ such that $L'_{1, c'} \in [m]$.
		Since $L'_{1,\ell}\in[m]$ for $\ell\in[m-\alpha]$ and $L'_{1, m-\alpha+1}\notin[m]$, there are exactly $\alpha$ choices for $c'$. Thus there are exactly $\alpha|\Delta_{i-1} \setminus \Delta_i|$ switches from a rectangle in $\Delta_{i-1} \setminus \Delta_i$ to a rectangle in $\Delta_i$.
		
		We have shown that $(n-m)|\Delta_i| = \alpha|\Delta_{i-1} \setminus \Delta_i|$. Therefore,
		\[
		\Pr(\L \in \Delta_i \mid  \L \in \Delta_{i-1}) = \frac{|\Delta_i|}{|\Delta_i|+|\Delta_{i-1} \setminus \Delta_i|} = \frac{\alpha}{n+\alpha-m},
		\]
		as required. To deal with the case where $i=m+1-\alpha$ we use symbol cycle and incomplete symbol cycle switching instead of column cycle and incomplete column cycle switching. Crucially, no two cells in $T_{m+1-\alpha}$ contain the same symbol, since we look at the first column before we look at any other cells.
	\end{proof}
	
	Our next task is to bound \eref{e:term} when $t_i$ is in the $m$-th row or column. To do that we need to prove the preliminary results \lref{l:prelim} and \lref{l:prelim2} below. We also need the following definitions. For a set $T \subseteq [m]^2$ of cells define $\Gamma(T)$ to be the set of $k \times n$ partial Latin rectangles $P$ such that the set of non-empty cells of $P$ is exactly $T$ and every symbol of $P$ is in $[m]$. Also define 
	\begin{equation}\label{e:Gammastar}
		\Gamma^*(T) = \bigcup_{T' \subseteq T} \Gamma(T').
	\end{equation}
	
	\begin{lem}\label{l:prelim}
		Let $P \in \Gamma^*([m]^2)$, and let $(r, c)$ be a non-empty cell of
		$P$.  Also let $r' \in [k]\setminus [m]$. Define
		$C'=\{c,c_1,c_2,\ldots,c_\ell\}\subseteq[m]$ to be the set of columns
		$c'$ for which cell $(r,c')$ of $P$ is non-empty.
		Suppose that
		the entries of $P$ in columns $c$ and $c_i$ form a union of column
		cycles, for every $i \in [\ell]$.  
		Let $A$ be the set of $k \times n$ Latin rectangles containing $P$, and
		let $X\subseteq A$ be the set of rectangles in $A$ such that
		the row cycle $\rho(r, r', c)$ does not hit any column in $C'\setminus\{c\}$.
		Let $\L$ be a random $k \times n$ Latin rectangle. Then
		\[
		\Pr(\L \in X\mid  \L \in A)=\frac1{\ell+1}.
		\]
	\end{lem}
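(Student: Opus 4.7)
The plan is to prove the equivalent statement $|A| = (k+1)|X|$ by exhibiting an explicit bijection $\Phi \colon X \times \{0, 1, \ldots, k\} \to A$. For each $i \in [k]$ and each $L \in A$, let $\kappa_i(L)$ denote the column cycle of $L$ in columns $\{c, c_i\}$ containing the cell $(r', c)$, and let $\sigma_i \colon A \to A$ be the operation of switching on $\kappa_i(L)$. Because $(r', c) \notin P$, the hypothesis that the $P$-entries in columns $c$ and $c_i$ form a union of column cycles forces $\kappa_i(L)$ to be disjoint from $P$, so $\sigma_i L$ still lies in $A$ and $\sigma_i$ is an involution on $A$. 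Moreover, since $(r, c) \in P$ lies in a column cycle wholly inside $P$, row $r$ is not touched by the switch; a direct computation then shows that the row permutation transforms as $\tau_{r, r'}(\sigma_i L) = \tau_{r, r'}(L) \cdot (s_c, s_{c_i})$, where $s_c = P_{r, c}$ and $s_{c_i} = P_{r, c_i}$.

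The hardest step will be the following \emph{unique reversibility} claim: for every $L'' \in A \setminus X$, there is a unique index $i^*(L'') \in [k]$ such that $\sigma_{i^*(L'')} L''$ lies in $X$. Writing the cycle of $s_c$ in $\tau_{r, r'}(L'')$ as $(s_c = a_0, a_1, \ldots, a_\ell)$ and letting $S \subseteq [k]$ consist of those $j$ with $s_{c_j} = a_{t_j}$ appearing in this cycle (so $S \neq \emptyset$), I will use the standard fact that right-multiplying a permutation by a transposition of two same-cycle elements splits the cycle. When $i \in S$, the new cycle of $s_c$ is exactly $\{a_0, a_{t_i + 1}, \ldots, a_\ell\}$, which avoids every $s_{c_j}$ precisely when $t_i = \max_{j \in S} t_j$; this gives a unique admissible $i$. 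When $i \notin S$, the transposition instead merges two distinct cycles, keeping a pre-existing $s_{c_j}$ inside the cycle of $s_c$, so no such $i$ works. The same computation run in reverse shows that $\sigma_i L \in A \setminus X$ for every $L \in X$ and every $i \in [k]$, since then the transposition merges the cycles of $s_c$ and $s_{c_i}$.

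Setting $\Phi(L, 0) = L$ and $\Phi(L, i) = \sigma_i L$ for $i \in [k]$, surjectivity is immediate: every $L'' \in X$ equals $\Phi(L'', 0)$, and every $L'' \in A \setminus X$ equals $\Phi(\sigma_{i^*(L'')} L'', i^*(L''))$. For injectivity, the only nontrivial case is two pairs $(L, i), (L', j)$ with $i, j \in [k]$ and $\sigma_i L = \sigma_j L' =: L''$; then $L = \sigma_i L''$ and $L' = \sigma_j L''$ both lie in $X$, so the uniqueness of $i^*(L'')$ forces $i = j$ and hence $L = L'$. The bijection yields $|A| = (k+1)|X|$, and therefore $\Pr(\L \in X \mid \L \in A) = 1/(k+1)$.
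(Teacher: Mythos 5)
Your proof is correct and rests on the same core mechanism as the paper's: both track how a column swap between $c$ and $c_i$ acts on the row permutation $\tau_{r,r'}$ as multiplication by the transposition $(P_{r,c},\,P_{r,c_i})$, and both identify the unique admissible index via the maximal position of an $s_{c_j}$ in the cycle of $P_{r,c}$. The only packaging difference is that you switch on the single column cycle $\kappa_i(L)$ through $(r',c)$ (an involution on $A$, counted via a bijection $X\times\{0,\ldots,k\}\to A$), whereas the paper swaps columns $c$ and $c'$ across all rows outside $U$ and counts via a $k$-to-$1$ map $A\setminus X\to X$; these two operations differ on rows outside $\{r,r'\}$ but have identical effect on $\tau_{r,r'}$, so either works.
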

	
	\begin{proof}
		Let $U \subseteq [m]$ be the set of rows containing a non-empty cell
		of $P$ in column $c$.  Define $Y = A \setminus X$ and a map
		$\phi:Y \to X$ as follows. Let $L \in Y$ and let $s = L_{r, c}$. Write the
		cycle of $\tau_{r, r'}$ containing $s$ as $(s, x_1, x_2, \ldots,x_u)$.
		Let $v \leq u$ be maximal such that $x_v = L_{r, c_j}$
		for some $j \in [\ell]$ and let $c' = c_j$ for this particular
		$j$. Note that $v$ exists since $L \in Y$. Also note that $v<u$
		since if $v = u$ this would imply that $(r', c', s) \in L$,
		contradicting the fact that the entries of $P$ in columns $c$ and
		$c'$ form a union of column cycles. Define $\phi(L)$ by swapping the
		symbols in cells $(r'', c)$ and $(r'', c')$ for every $r'' \in
		[k]\setminus U$. The fact that the entries of $P$ in columns $c$ and
		$c'$ form a union of column cycles implies that $\phi(L)$ is
		reached from $L$ by switching one or more column cycles or incomplete column cycles. 
		Hence, $\phi(L)$ is indeed a Latin rectangle and $\phi(L) \in A$.  The
		cycle of $\tau_{r, r'}$ of $\phi(L)$ containing $s$ is
		$(s,x_{v+1}, \ldots, x_u)$, meaning that $\phi(L) \in X$.
		
		Let $L'\in X$. We now argue that there are exactly $\ell$ rectangles $L\in
		Y$ such that $\phi(L)=L'$. Note that $L'$ is obtained from any such
		$L$ by swapping all symbols in cells $(r'', c)$ and $(r'', c_w)$
		for some $w \in [\ell]$, and all $r'' \in [k] \setminus U$. It is
		immediate that there are at most $\ell$ possible rectangles $L$ such that
		$\phi(L)=L'$. We need to show that each of the $\ell$ possibilities is
		realised.  The fact that $L'\in X$ ensures that $\tau_{r,r'}$ of $L'$
		contains two separate cycles $(\alpha_1,\alpha_2,\ldots,\alpha_a)$ and
		$(\beta_1,\beta_2,\ldots,\beta_b)$ where $\alpha_1=L'_{r,c}$ and
		$\beta_1=L'_{r,c_w}$. Swapping the contents of
		cells $(r'', c)$ and $(r'', c_w)$
		for all $r'' \in [k] \setminus U$ produces a Latin rectangle in which
		$\tau_{r,r'}$ contains the cycle
		$(\alpha_1,\beta_2,\ldots,\beta_b,\beta_1,\alpha_2,\ldots,\alpha_a)$, which hits
		both columns $c$ and $c_w$. By definition
		such a rectangle is in $Y$. 
		It follows that $|Y|=\ell|X|$ and
		\[
		\Pr\left(\L \in X\mid\L \in A\right)=\frac{|X|}{|X|+|Y|}=\frac1{\ell+1},
		\]
		as required.
	\end{proof}

	\begin{lem}\label{l:prelim2}
		Let $P \in \Gamma^*([m]^2)$, and let $(r, c)$ be a non-empty cell of
		$P$.  Also let $c' \in [n]\setminus [m]$. Define
		$R'=\{r,r_1,r_2,\ldots,r_\ell\}\subseteq[m]$ to be the set of rows
		$r'$ for which cell $(r',c)$ of $P$ is non-empty.
		Suppose that
		the entries of $P$ in rows $r$ and $r_i$ form a union of row
		cycles, for every $i \in [\ell]$.  
		Let $A$ be the set of $k \times n$ Latin rectangles containing $P$, and
		let $X\subseteq A$ be the set of rectangles in $A$ such that
		$\sigma(c, c', r)$ does not hit any row in $R'\setminus\{r\}$.
		Let $\L$ be a random $k \times n$ Latin rectangle Then
		\[
		\Pr(\L \in X\mid  \L \in A) \geq \frac1{\ell+1}.
		\]
	\end{lem}
	
	\begin{proof}
		The proof is similar to the proof of \lref{l:prelim}. Let $U \subseteq [m]$ be the set of columns containing a non-empty cell of $P$ in row $r$. Define $Y = A \setminus X$ and a map $\phi : Y \to X$ as follows. Let $L \in Y$ and let $s = L_{r, c}$. First suppose that $\sigma(c, c', r)$ is a column cycle. Write the cycle of $\kappa_{c, c'}$ containing $s$ as $(s, x_1, x_2, \ldots, x_u)$. Let $v < u$ be maximal such that $x_v = L_{r_j, c}$ for some $j \in [\ell]$ and let $r' = r_j$ for this particular $j$. By the same arguments as in the proof of \lref{l:prelim} we know that $v$ exists. Define $\phi(L)$ by swapping the symbols in cells $(r, c'')$ and $(r', c'')$ for every $c \in [n] \setminus U$. The fact that the entries of $P$ in rows $r$ and $r'$ form a union of row cycles implies that $\phi(L)$ is reached from $L$ by switching on one or more row cycles. Hence, $\phi(L)$ is indeed a Latin rectangle and $\phi(L) \in A$. The cycle of $\kappa_{c, c'}$ of $\phi(L)$ containing symbol $s$ is $(s, x_{v+1}, \ldots, x_u)$ meaning that $\phi(L) \in X$. Now suppose that $\sigma(c, c', r)$ is an incomplete column cycle. Write the incomplete cycle of $\kappa_{c, c'}$ as $[y_1, \ldots, y_{u'}, s, x_1, \ldots, x_u]$. We now consider two cases. First, suppose that there is some $v \in [u']$ such that $L_{r_j, c} = y_v$ for some $j \in [\ell]$. Let such a $v$ be maximal and let $r' = r_j$ for this value of $j$. If $v = u'$ then $L$ must contain the entry $(r', c', s)$, contradicting the fact that the entries of $P$ in rows $r$ and $r'$ form a union of row cycles. So $v < u'$. Let $\phi(L)$ be obtained from $L$ by swapping the contents in cells $(r, c'')$ and $(r', c'')$ for every $c'' \in [n] \setminus U$. The fact that the entries of $P$ in rows $r$ and	$r'$ form a union of row cycles implies that $\phi(L)$ is reached from $L$ by switching one or more row cycles. Hence, $\phi(L)$ is indeed a Latin rectangle and $\phi(L) \in A$. In $\phi(L)$, $\sigma(c, c', r)$ is a cycle and the cycle of $\kappa_{c, c'}$ of $\phi(L)$ containing $s$ is $(s, y_{v+1}, \ldots, y_{u'})$. Thus, $\phi(L) \in X$. Finally, suppose that no such $v$ exists. Since $L \in Y$ there is some $v' < u$ such that $L_{r_j, c} = x_{v'}$. Let such a $v'$ be maximal and let $r' = r_j$ for this value of $j$. Let $\phi(L)$ be obtained from $L$ by swapping the contents in cells $(r, c'')$ and $(r', c'')$ for every $c'' \in [n] \setminus U$. Then $\phi(L) \in A$ and the incomplete cycle of $\kappa_{c, c'}$ containing $s$ is $[y_1, \ldots, y_{u'}, s, x_{v'+1}, \ldots, x_u]$. Therefore, $\phi(L) \in X$.
		
		Let $L' \in X$. We now argue that there are at most $\ell$ rectangles
		$L\in Y$ such that $\phi(L)=L'$. This is true since $L'$ is obtained
		from any such $L$ by swapping all symbols in cells $(r, c'')$ and
		$(r_w, c'')$ for some $w \in [\ell]$, and all $c'' \in [n] \setminus
		U$.  It follows that $|Y| \leq \ell|X|$ and
		\[
		\Pr\left(\L \in X\mid\L \in A\right)=\frac{|X|}{|X|+|Y|} \geq \frac1{\ell+1},
		\]
		as required.
	\end{proof}

	We can now bound \eref{e:term} when $t_i$ is in the $m$-th row.
	
	\begin{lem}\label{l:setrowcycs}
		Let $i=jm$ where $j\in[m]$ and let $\L$ be a random $k \times n$
		Latin rectangle. Then
		\[
		\Pr(\L \in \Delta_i \mid  \L \in \Delta_{i-1}) \leq \frac{j}{k+j-m}.
		\]
	\end{lem}
	
	\begin{proof}
		We use row cycle switching to estimate the relative sizes of $\Delta_i$ and $\Delta_{i-1} \setminus \Delta_i$. For a partial Latin rectangle $P \in \Gamma(T_i)$ let $\mathcal{L}_P$ denote the set of $k \times n$ Latin rectangles containing $P$. So $\Delta_i$ is the disjoint union 
		\[
		\bigcup_{P \in \Gamma(T_i)} \mathcal{L}_P.
		\]
		
		Let $r' \in [k]\setminus [m]$. Note that if $L \in \Delta_i$ then $L_{m, j} \in [m]$ and $L_{r', j} \notin [m]$. Let $P \in \Gamma(T_i)$. By \lref{l:prelim} there are $|\mathcal{L}_P|/j$ rectangles $L \in \mathcal{L}_P$ such that switching $L$ on $\rho(m, r', j)$ yields a rectangle in $\Delta_{i-1} \setminus \Delta_i$. Thus the number of switches from a rectangle in $\Delta_i$ to one in $\Delta_{i-1} \setminus \Delta_i$ is $(k-m)|\Delta_i|/j$.
		
		Reversing the switching, consider $L' \in \Delta_{i-1} \setminus \Delta_i$. Note that $L'_{m, j} \notin [m]$ and there is at most one $r' \in [k]\setminus [m]$ such that $L'_{r', j} \in [m]$. Therefore there is at most one switch from $L'$ to a rectangle in $\Delta_i$.
		
		We have thus shown that $(k-m)|\Delta_i|/j \leq |\Delta_{i-1} \setminus \Delta_i|$ and so
		\[
		\Pr(\L \in \Delta_i \mid  \L \in \Delta_{i-1}) = \frac{|\Delta_i|}{|\Delta_i|+|\Delta_{i-1} \setminus \Delta_i|} \leq \frac j{k+j-m},
		\]
		as required.	
	\end{proof}
	
	Using column cycle and incomplete column cycle switching instead of
	row cycle switching we can bound \eref{e:term} when $t_i$ is in the
	$m$-th column.
	
	\begin{lem}\label{l:setcolcycs}
		Let $i=m^2-m+j$ where $j\in[m]$
		and let $\L$ be a random $k \times n$ Latin rectangle. Then
		\[
		\Pr(\L \in \Delta_i \mid  \L \in \Delta_{i-1}) \leq \frac j{n+j-m}.
		\]
	\end{lem}
	
	\begin{proof}
		The proof is similar to the proof of \lref{l:setrowcycs}. We will again denote the set of $k \times n$ Latin rectangles containing $P$ by $\mathcal{L}_P$ for $P \in \Gamma(T_i)$. 
		
		Let $c' \in [n] \setminus [m]$. Note that if $L \in \Delta_i$ then $L_{j, m} \in [m]$ and $L_{j, c'} \notin [m]$. Let $P \in \Gamma(T_i)$. By \lref{l:prelim2} there are at least $|\mathcal{L}_P|/j$ rectangles $L \in \mathcal{L}_P$ such that switching $L$ on $\sigma(m, c', j)$ yields a rectangle in $\Delta_{i-1} \setminus \Delta_i$. Thus the number of switches from a rectangle in $\Delta_i$ to one in $\Delta_{i-1} \setminus \Delta_i$ is at least $(n-m)|\Delta_i|/j$.
		
		Reversing the switching, consider $L' \in \Delta_{i-1} \setminus \Delta_i$. Note that $L'_{j, m} \notin [m]$ and there is a unique $c' \in [n]\setminus [m]$ such that $L'_{j, c'} \in [m]$. Therefore there is at most one switch from $L'$ to a rectangle in $\Delta_i$, namely $\sigma(m, c', j)$.
		
		We have thus shown that $(n-m)|\Delta_i|/j \leq |\Delta_{i-1} \setminus \Delta_i|$ and so
		\[
		\Pr(\L \in \Delta_i \mid  \L \in \Delta_{i-1}) = \frac{|\Delta_i|}{|\Delta_i|+|\Delta_{i-1} \setminus \Delta_i|} \leq \frac j{n+j-m},
		\]
		as required.	
	\end{proof}
	
	We are now ready to prove \tref{t:subsqbound}.
	
	\begin{proof}[Proof of \tref{t:subsqbound}]
		As previously mentioned, the probability that $\L[R, C]$ is a Latin square of order $m$ with symbol set $S$ is equal to the probability that $\L \in \Delta_{m^2}$. By \eref{e:cr} combined with \lref{l:setnrc}, \lref{l:setrowcycs} and \lref{l:setcolcycs} we obtain the bound
		\[
		\begin{aligned}
			\Pr(\L \in \Delta_{m^2}) &\leq
			\Bigg(\prod_{j=1}^m\frac{j}{n+j-m}\Bigg)^2\Bigg(\prod_{j=2}^{m-1}\frac{j}{n+j-m}\Bigg)\Bigg(\prod_{j=2}^{m-1}\frac{j}{k+j-m}\Bigg) \\
			&=\frac{nk(n+1-m)(k+1-m)}{m^2\binom{n}{m}^3\binom{k}{m}}
		\end{aligned}
		\]
		as required.
	\end{proof}

	\subsection{Bounding probability of small subsquares}\label{ss:subsq4bound}
	
	The aim of this subsection is to prove the following theorem,
	which will enable us to bound the expected number of small
	subsquares. Our bound improves the bound in the previous
	subsection when $2<m=O(1)$, although it is worse when $m$ grows
	at least logarithmically in~$n$. The new bound will be used
	when $m\in\{4,5\}$ to get the accuracy required for \tref{t:main}.
	
	\begin{thm}\label{t:subsq4boundgen}
		Let $R \subseteq [k]$ be of cardinality $m$, let $C$ and $S$ be
		subsets of $[n]$ of cardinality $m$, and let $\L$ be a random
		$k\times n$ Latin rectangle where $k>m>2$.
		The probability that $\L[R, C]$ is a Latin
		square of order $m$ with symbol set $S$ is at most
		\[
		\frac{2^{m-1}m^{m^2+2m-5}}{(k-m)^{5m/2-6}}\left(\frac{1+O(m/n)}n\right)^{3m-2}
		\]
		if $m$ is even and at most
		\[
		\frac{2^{m-2}m^{m^2+2m-5}}{(k-m)^{(5m-13)/2}}\left(\frac{1+O(m/n)}n\right)^{3m-2}
		\]
		if $m$ is odd. If $k=m>2$ then the same bounds hold with $k-m$ replaced by $1$.
	\end{thm}
	
	As in \sref{ss:subsqbound}, we may assume that $R=C=S=[m]$. 
	Also, if $M$ and $M'$ are any two Latin squares of order $m$, then
	replacing $M$ by $M'$ gives an easy bijection from Latin rectangles
	containing $M$ as a subsquare to Latin rectangles containing $M'$. Hence,
	if $\L$ is a random $k \times n$ Latin rectangle then
	\begin{equation}\label{e:swapM}
		\Pr(\L[[m], [m]] = M) = \Pr(\L[[m], [m]] = M').
	\end{equation}
	Thus without loss of
	generality we may assume that $\L[[m],[m]]=M$ is a specific Latin square
	of order $m$, which we will choose later. Ultimately, we will multiply
	by $m^{m^2}$, which is a trivial upper bound on the number of Latin
	squares of order $m$, to obtain \tref{t:subsq4boundgen}.	
	The strategy is similar to the proof of \tref{t:subsqbound}.
	Recalling \eref{e:Gammastar}, we let
	$P$ and $P'$ be elements of $\Gamma^*([m]^2)$ such that $P \subseteq
	P'$ and $P'\setminus P=\{(r, c, s)\}$. We will give non-trivial upper
	bounds on
	\begin{equation}\label{e:term2}
		\Pr(\L \supseteq P' \mid  \L \supseteq P)	
	\end{equation}
	when $P$ and $P'$ satisfy certain conditions. 
	
	First, we bound \eref{e:term2} when $(r, c, s)$ is the only occurrence
	of row $r$, the only occurrence of column $c$, or the only occurrence
	of symbol $s$ in $P'$.
	
	\begin{lem}\label{l:newrcs}
		Let $\{P, P'\} \subseteq \Gamma^*([m]^2)$ be such that $P \subseteq
		P'$ and $P' \setminus P = \{(r, c, s)\}$. Let $\L$ be a random
		$k\times n$ Latin rectangle, where $k>m$. Suppose that $P$ has no
		entry in row $r$. Then
		\begin{equation*}
			\Pr(\L \supseteq P' \mid  \L \supseteq P) \leq \frac1{k-m}.
		\end{equation*}
		If $P$ has no entry in column $c$ or no entry with symbol $s$ then
		\begin{equation*}
			\Pr(\L \supseteq P' \mid  \L \supseteq P) \leq \frac{1+O(m/n)}n.
		\end{equation*}
	\end{lem}
	
	\begin{proof}
		First consider the case where $P$ has no entry in row $r$. Let $Y$
		be the set of $k \times n$ Latin rectangles containing $P$ and let $X
		\subseteq Y$ be the set of rectangles containing $P'$. We use row cycle
		switching to find bounds on the relative sizes of $X$ and $Y\setminus X$.
		
		Let $L \in X$. For every row $r' \in [k]\setminus [m]$, switching
		on $\rho(r, r', c)$ yields a rectangle in $Y \setminus X$. This gives
		exactly $k-m$ switches from $L$ to a rectangle in $Y \setminus X$.
		
		Now, consider $L' \in Y \setminus X$ and note that there is at most
		one $r' \in [k]$ such that $L'_{r',c}=s$. Thus there is at most one
		row cycle which can be switched back to change $L'$ into some
		rectangle in $X$.  Thus at most one switch counts from $L'$ to a
		rectangle in $X$.
		
		We have deduced that $(k-m)|X| \leq |Y \setminus X| \leq |Y|$. Thus
		\[
		\Pr(\L \supseteq P' \mid  \L \supseteq P) = \frac{|X|}{|Y|} \leq \frac1{k-m}.
		\]
		
		To deal with the case where $P$ has no entry in column $c$ or has no
		entry with symbol $s$ we use analogous arguments with column cycle
		and incomplete column cycle switching or symbol cycle and incomplete
		column cycle switching, respectively. In each case there are
		exactly $n-m$ switches from $L$ to a rectangle in $Y \setminus X$ and
		hence
		\[
		\Pr(\L \supseteq P' \mid  \L \supseteq P) \leq \frac1{n-m}=\frac{1+O(m/n)}n.\qedhere
		\]
	\end{proof}
	
	We now bound \eref{e:term2} when $P'$ contains a cell $(r,c)$
	satisfying the conditions of \lref{l:prelim}.
	
	\begin{lem}\label{l:colcycs}
		Let $\{P, P'\} \subseteq \Gamma^*([m]^2)$ be such that $P \subseteq
		P'$ and $P' \setminus P = \{(r, c, s)\}$. Let $C'$ be the set of
		columns $c'$ for which the cell $(r, c')$ of $P'$ is
		non-empty. Suppose further that for every $c' \in C'\setminus\{c\}$,
		the entries of $P'$ in columns $c$ and $c'$ form a
		union of column cycles. Let $\L$ be a random $k \times n$ Latin
		rectangle where $k>m$. Then
		\[
		\Pr(\L \supseteq P' \mid  \L \supseteq P) \leq \frac{|C'|}{k-m}.
		\]
	\end{lem}
	
	\begin{proof}
		Let $Y$ be the set of $k \times n$ Latin rectangles containing $P$ and
		let $X \subseteq Y$ be the set of rectangles containing $P'$.
		
		Let $r' \in [k]\setminus [m]$. By \lref{l:prelim} there are 
		$|X|/|C'|$ rectangles in $X$ for which switching on $\rho(r, r', c)$
		yields a rectangle in $Y \setminus X$. Thus there are 
		$(k-m)|X|/|C'|$ switches from $X$ to $Y \setminus X$.
		
		Now let $L' \in Y \setminus X$. The same argument as given in the
		proof of \lref{l:newrcs} tells us that there is at most one row cycle
		that can be switched from $L'$ to reach a rectangle in $X$.
		
		We have shown that $(k-m)|X|/|C'| \leq |Y \setminus X| \leq |Y|$. Thus
		\[
		\Pr(\L \supseteq P' \mid  \L \supseteq P) = \frac{|X|}{|Y|} \leq \frac{|C'|}{k-m},
		\]
		as required.
	\end{proof}
	
	By using column cycle and incomplete column cycle switching rather
	than row cycle switching and employing \lref{l:prelim2} we can prove
	the following lemma.
	
	\begin{lem}\label{l:rowcycs}
		Let $\{P, P'\} \subseteq \Gamma^*([m]^2)$ be such that $P \subseteq
		P'$ and $P' \setminus P = \{(r, c, s)\}$. Let $R'$ be the set of
		rows $r'$ for which the cell $(r', c)$ of $P'$ is non-empty. Suppose
		further that for every $r' \in R' \setminus \{r\}$, the entries of
		$P'$ in rows $r$ and $r'$ form a union of row cycles. Let $\L$ be a
		random $k \times n$ Latin rectangle. Then
		\[
		\Pr(\L \supseteq P' \mid  \L \supseteq P) \leq \frac{|R'|}n(1+O(m/n)).
		\]
	\end{lem}
	
	Our next task is to choose the Latin square $M$ of order $m$. Our
	choice of $M$ allows us to apply \lref{l:newrcs}, \lref{l:colcycs} or
	\lref{l:rowcycs} many times to give non-trivial bounds on
	\eref{e:term2}. If $m$ is even then define $M$ to be a Latin square of
	order $m$ with the following properties:
	\begin{itemize}
		\item $M_{1, i} = M_{i, 1} = i$ for every $i \in [m]$,
		\item $M_{2, i} = M_{i, 2} = i+1$ for every odd $i \in [m]$,
		\item $M_{2, i} = M_{i, 2} = i-1$ for every even $i \in [m]$.
	\end{itemize}
	If $m$ is odd then define $M$ to be a Latin square of order $m$ satisfying:
	\begin{itemize}
		\item $M_{1, i} = M_{i, 1} = i$ for every $i \in [m]$, 
		\item $M_{2, i} = M_{i, 2} = i+1$ for every odd $i \in [m-3]$,
		\item $M_{2, i} = M_{i, 2} = i-1$ for every even $i \in [m-3]$,
		\item $(M_{2, m-2}, M_{2, m-1}, M_{2, m}) = (M_{m-2, 2}, M_{m-1, 2}, M_{m, 2}) = (m-1, m, m-2)$.
	\end{itemize}
	Such a square $M$ exists
	by~\cite[Theorem~$1.5$]{2rows2cols}. See \fref{f:LS45} 
	for examples of such squares when $m=4$ and $m=5$. These are the only
	two orders for which \tref{t:subsq4boundgen} will be used
	in the proof of \tref{t:main}.
	
	\begin{figure}
		\[
		\begin{minipage}{.4\textwidth}
			\begin{equation*}
				\left[\;
				\begin{matrix}
					\con1&\con2&\con3&\con4\\
					\con2&\cth1&\con4&\cth3\\
					\con3&4&1&\cth2\\
					\con4&\ctw3&\ctw2&\cth1
				\end{matrix}
				\;\right]
			\end{equation*}
		\end{minipage}
		\begin{minipage}{.4\textwidth}
			\begin{equation*}
				\left[\;
				\begin{matrix}
					\con1&\con2&\con3&\con4&\con5\\
					\con2&\cth1&\con4&\con5&\cth3\\
					\con3&4&5&1&\cth2\\
					\con4&5&2&3&\cth1\\
					\con5&\ctw3&\ctw1&\ctw2&\cth4
				\end{matrix}
				\;
				\right]
			\end{equation*}
		\end{minipage}
		\]
		\caption{\label{f:LS45}The Latin squares $M$ when
			$m\in\{4,5\}$. Colour coding
			$\;\begin{matrix}\con&,&\ctw&,&\cth\end{matrix}\;$ indicates cells
			to which Lemmas \ref{l:newrcs}, \ref{l:colcycs} and
			\ref{l:rowcycs}, respectively, will be applied in the proof of
			\tref{t:subsq4boundgen}.}
	\end{figure}
	
	We are now ready to prove \tref{t:subsq4boundgen}.
	
	\begin{proof}[Proof of \tref{t:subsq4boundgen}]
		First suppose that $m$ is even and $k>m$. We assume that $R=C=S=[m]$ and that $M$ is the Latin square of order $m$ defined above. Let $P$ be the $k \times n$ partial Latin rectangle such that $P[[m], [m]] = M$ and every cell of $P$ outside of $[m]^2$ is empty. We define a family of partial Latin rectangles $P_i \subseteq P$ as follows. Firstly, $P_0 = \emptyset$. Then for $i \in [m^2]$ we define $P_i$ by adding an entry of $P$ to $P_{i-1}$. 
		
		For $i \in [2m]$ we add the entry of $P$ in cell $(1, (i+1)/2)$ if $i$ is odd and we add the entry in cell $(2, i/2)$ if $i$ is even. If $i$ is odd then
		\begin{equation}\label{e:bnd1}
			\Pr(\L \supseteq P_i \mid  \L \supseteq P_{i-1}) \leq (1+O(m/n))/n
		\end{equation}
		by \lref{l:newrcs}, since there is only one non-empty cell of $P_i$ in column $(i+1)/2$. Similarly, if $i \equiv 2 \bmod 4$ then
		\eref{e:bnd1} holds
		by \lref{l:newrcs}, since the entry $(2, i/2, i/2+1)$ is the only entry of $P_i$ with symbol $i/2+1$. If $i \equiv 0 \bmod 4$ then
		\begin{equation}\label{e:bnd2}
			\Pr(\L \supseteq P_i \mid  \L \supseteq P_{i-1}) \leq 2(1+O(m/n))/n,
		\end{equation}
		by \lref{l:rowcycs}. 
		
		For $i \in \{2m+1, \ldots, 4m-4\}$ we add the entry of $P$ in cell
		$((i-2m+5)/2, 1)$ if $i$ is odd and we add the entry in cell
		$((i-2m+4)/2, 2)$ if $i$ is even. If $i$ is odd then 
		\begin{equation}\label{e:bndk1}
			\Pr(\L \supseteq P_i \mid  \L \supseteq P_{i-1}) \leq 1/(k-m)
		\end{equation}
		holds by \lref{l:newrcs}, since there is only one
		non-empty cell of $P_i$ in row $(i-2m+5)/2$. If
		$i \equiv 0 \bmod 4$ then 
		\begin{equation}\label{e:bndk2}
			\Pr(\L \supseteq P_i \mid  \L \supseteq P_{i-1}) \leq 2/(k-m)
		\end{equation}
		holds by \lref{l:colcycs}. If $i \equiv 2 \bmod 4$
		we simply use $\Pr(\L \supseteq P_i \mid \L \supseteq P_{i-1})\le1$.
		
		For $i \in \{4m-3, \ldots, m^2\}$ we add the entry of $P$ in cell $(3+\lfloor (i-4m+3)/(m-2) \rfloor, 3+((i-4m+3) \bmod m-2))$ (this is the natural ordering for the cells in $\{3, 4, \ldots, m\}^2$). If $i \equiv 4 \bmod m-2$ (in which case the added entry is in column $m$) then
		\begin{equation}\label{e:bnd3}
			\Pr(\L \supseteq P_i \mid  \L \supseteq P_{i-1}) \leq m(1+O(m/n))/n,
		\end{equation}
		by \lref{l:rowcycs}. If $i \geq m^2-m+3$ (in which case the added entry is in
		row $m$) then
		\begin{equation}\label{e:bndk3}
			\Pr(\L \supseteq P_i \mid  \L \supseteq P_{i-1}) \leq m/(k-m)
		\end{equation}
		by \lref{l:colcycs}.  For all other values of $i$ we
		use $\Pr(\L \supseteq P_i \mid \L \supseteq P_{i-1})\le1$.
		
		Overall, we use \eref{e:bnd1} for $3m/2$ values of $i$, corresponding
		to the $m$ entries in the first row and $m/2$ entries in the second
		row. We use \eref{e:bnd2} for $m/2$ values of $i$, corresponding to
		entries in the second row. We use \eref{e:bndk1} for $m-2$ values of
		$i$, corresponding to entries in the first column. We use
		\eref{e:bndk2} for $m/2-1$ values of $i$, corresponding to entries in
		the second column. We use \eref{e:bnd3} for $m-2$ values of $i$,
		corresponding to entries in the $m$-th column. We use \eref{e:bndk3}
		for $m-3$ values of $i$, corresponding to entries in the $m$-th row.
		Hence, by the chain rule of probability,
		\begin{align}
			\Pr(\L \supseteq P) &= \prod_{i=1}^{m^2} \Pr(\L \supseteq P_i \mid  \L \supseteq P_{i-1}) \nonumber\\
			&\leq \left(\frac{1+O(m/n)}n\right)^{3m/2}\left(\frac{2(1+O(m/n))}n\right)^{m/2}\left(\frac1{k-m}\right)^{m-2} \\
			&\hspace{5mm}\cdot\left(\frac{2}{k-m}\right)^{m/2-1}\left(\frac{m(1+O(m/n))}n\right)^{m-2}\left(\frac{m}{k-m}\right)^{m-3} \nonumber\\
			&= \frac{2^{m-1}m^{2m-5}}{(k-m)^{5m/2-6}}\left(\frac{1+O(m/n)}n\right)^{3m-2}.\label{e:evencase}
		\end{align}
		If $m$ is even and $k=m$ then the same bounds hold except we use the trivial
		bound in place of \eref{e:bndk1}, \eref{e:bndk2} and \eref{e:bndk3}.

		The argument when $m$ is odd is similar. We use \eref{e:bnd1} for
		$(3m+1)/2$ entries, namely $m$ entries in the first row and $(m+1)/2$
		entries in the second row. We use \eref{e:bnd2} for $(m-1)/2$ entries
		in the second row. If $k>m$ then we use \eref{e:bndk1} for $m-2$ entries in the
		first column and \eref{e:bndk2} for $(m-3)/2$ entries in the
		second column. We use \eref{e:bnd3} for $m-2$ entries in the $m$-th
		column and if $k>m$ then we use \eref{e:bndk3} for $m-3$ entries in the $m$-th
		row. As a consequence,
		\begin{equation}\label{e:oddcase}
			\Pr(\L \supseteq P) \leq \frac{2^{m-2}m^{2m-5}}{(k-m)^{(5m-13)/2}}\left(\frac{1+O(m/n)}n\right)^{3m-2},
		\end{equation}
		with $k-m$ replaced by $1$ in the case when $k=m$.
		As foreshadowed after \eref{e:swapM}, we now
		multiply by $m^{m^2}$ to give the claimed upper bound
		on $\L[R, C]$ being any subsquare on the symbols~$[m]$.
	\end{proof}
	
	\subsection{Proof of \tref{t:main}}\label{ss:combine}
	
	We are now ready to prove our main theorem.
	
	\begin{proof}[Proof of \tref{t:main}]
		Let $m$ be an integer satisfying $4 \leq m \leq \min\{k, n/2\}$.
		There are $\binom{k}{m}{}$ choices for the set $R$ and there are
		$\binom{n}{m}{}^2$ choices for the sets $C$ and $S$ in
		\tref{t:subsqbound} and \tref{t:subsq4boundgen}. From
		\tref{t:subsqbound} and $\binom{n}{m}\geq (n/m)^m$, we obtain
		\begin{align}
			\E_m(k, n) &\leq \frac{kn(k+1-m)(n+1-m)}{m^2\binom{n}{m}}
			\leq\frac{m^{m-2}}{n^{m-4}}=\frac{m^4}{n^2}\Big(\frac mn\Big)^{m-6}\le\frac{m^4}{n^22^{m-6}}=O(n^{-2}), \label{e:p2}
		\end{align}
		provided $m\ge6$.
		
		For $m\in\{4,5\}$ and $k>m$ we use \tref{t:subsq4boundgen} to deduce that
		\begin{align*}
			\E_4(k,n) &\leq \binom{k}{4}\binom{n}{4}^2O(1)\left(\frac{1+O(1/n)}n\right)^{10}\left(\frac{1}{k-4}\right)^{4}=O(n^{-2}),
		\end{align*}
		and
		\begin{align*}
			\E_5(k,n) &\leq \binom{k}{5}\binom{n}{5}^2O(1)\left(\frac{1+O(1/n)}n\right)^{13}\left(\frac{1}{k-5}\right)^{6}
			= O(n^{-3}).
		\end{align*}
		When $k=m\in\{4,5\}$ all terms involving $k$ can be omitted, with the
		conclusion unchanged.  Combining with \eref{e:p2}, we have the
		result.
	\end{proof}
	
	\section{Subsquares of order $2$ or $3$}\label{s:subsq3}
	
	From \tref{t:main} we know that $\E_m(k,n)$ is asymptotically $0$ for
	all $m\ge4$. In this last section we consider subsquares of order $m<4$.
	McKay and Wanless~\cite{manysubsq} conjectured that
	$\E_3(n,n)=1/18+o(1)$. Kwan, Sah and Sawhney \cite{KSS} conjectured
	further that the distribution of the number of $3\times3$ subsquares
	in a random Latin square is asymptotically Poisson with this mean. If
	these conjectures are true, then $\lim_{n \to \infty} \E_m(n, n)$ is
	both positive and finite only when $m=3$.  Divoux, Kelly, Kennedy and
	Sidhu~\cite{subsqrandom} generalised McKay and Wanless' conjecture by
	suggesting that $\E_3(k, n) = (1+o(1))\binom{k}{3}/(3n^3)$.  Although
	we are not able to resolve the situation completely, we are able to
	prove that $\E_3(k, n)$ is bounded. The proof follows the derivation
	of \eref{e:oddcase} closely, but when $m=3$ that result was slightly
	conservative when counting entries in the second column to which
	\lref{l:colcycs} can be applied.
	
	\begin{thm} For $3\le k\le n$,
		\[
		\E_3(k, n) \leq \frac{2k}{3n}(1+O(1/k)).
		\]
	\end{thm}
	
	\begin{proof} 
		Let $M$ denote the Latin square
		\[
		\left[\;
		\begin{matrix}
			\con1&\con2&\con3\\
			\con2&\con3&\cth1\\
			\con3&\ctw1&\cth2
		\end{matrix}
		\;
		\right],
		\]
		with the same colouring convention as used in \fref{f:LS45}.  Let
		$P$ be the partial $k \times n$ Latin rectangle such that
		$P[[3],[3]]=M$ and every cell of $P$ outside of $[3]^2$ is empty. We
		first bound the probability that a random $k \times n$ Latin
		rectangle $\L$ contains $P$ in the case when $k>3$. We define a
		family of partial Latin rectangles $P_i\subseteq P$ as follows. Let
		$P_0 = \emptyset$ and for $i\in[9]$ let $P_i$ be obtained from
		$P_{i-1}$ by adding the entry of $P$ in the cell given by the $i$-th
		element of the tuple $((1, 1), (2, 1), (1, 2), (2, 2), (1, 3), (2,
		3), (3, 1), (3, 2), (3, 3))$. For $i \in [5]$ we have that
		\eref{e:bnd1} holds by \lref{l:newrcs}. For $i = 6$ we have that
		\eref{e:bnd2} holds by \lref{l:rowcycs}. For $i = 7$ we have that
		\eref{e:bndk1} holds by \lref{l:newrcs}. For $i = 8$ we have that
		\eref{e:bndk2} holds by \lref{l:colcycs}. Finally, we see that
		$\Pr(\L \supseteq P_9 \mid \L \supseteq P_{8}) \leq 3(1+O(1/n))/n$
		by \lref{l:rowcycs}. The chain rule of probability implies that
		\[
		\Pr(\L \supseteq P) \leq 12\left(\frac{1+O(1/n)}n\right)^7\left(\frac{1}{k-3}\right)^2 = \frac{12}{n^7(k-3)^2}(1+O(1/n)).
		\]
		Since there are $12$ Latin squares of order $3$ we obtain
		\[
		\E_3(k,n) \leq \binom{k}{3}\binom{n}{3}^2\frac{144}{n^7(k-3)^2}(1+O(1/n)) = \frac{2k}{3n}(1+O(1/k)),
		\]
		as required. If $k=3$ then we instead get
		\[
		\E_3(k,n) \leq \binom{n}{3}^2\frac{72}{n^7}(1+O(1/n)) = \frac{2}{n}(1+O(1/n)).
		\qedhere  \]
	\end{proof}
	
	Finally, we turn to the case $m=2$. Recall from \tref{t:intercs} that
	$\mathbb{E}_2(n,n) =
	(1+o(1))n^2/4$. Also,~\cite[Corollary~$1.7$]{subsqrandom} tells us
	that $\mathbb{E}_2(k,n)=(1/2+o(1))\binom{k}{2}$ whenever
	$k\leq(1/2-o(1))n$. We extend this result to all $k \leq n$. We first
	need the following lemma.
	
	\begin{lem}\label{l:intercbound}
		Let $\mathbf{L}$ be a random $k \times n$ Latin rectangle and let
		$\{i, j\} \subseteq [k]$ with $i \neq j$. The probability that
		$\L[\{i, j\}, [n]]$ contains at least $t$ intercalates is at most
		$\exp(-\Omega(t\log t))$.
	\end{lem}
	
	\begin{proof} 
		Let $P$ be the partial $k \times n$ Latin rectangle which is empty except that:
		\begin{itemize}
		\item $P_{i, \ell} = \ell$ for every $\ell \in [2t]$,
		\item $P_{j, \ell} = \ell+1$ for every odd $\ell \in [2t]$, and
		\item $P_{j, \ell} = \ell-1$ for every even $\ell \in [2t]$.
		\end{itemize}
		First suppose that $t\le n/4$, so that we can use the argument behind
		\eref{e:bnd1} and \eref{e:bnd2} to obtain
		\[
		\Pr(\mathbf{L} \supseteq P) \leq 2^tn^{-4t}(1+O(t/n))^{4t}.
		\]
		For $t$ intercalates in general position in $\mathbf{L}[\{i, j\}, [n]]$
		there are $\binom{n}{2t}$ choices for the symbols, $(2t)!/(2^tt!)$ ways
		to pair them, and then $n!/(n-2t)!$ ways to allocate the columns.
		Thus the probability that $\L[\{i,j\},[n]]$ contains at least $t$
		intercalates is at most
		\[
		\left(\frac{n!}{(n-2t)!}\right)^2\frac1{2^tt!}2^tn^{-4t}(1+O(t/n))^{4t}
		= \exp(-\Omega(t\log t)),
		\]
		as required. If $n/4<t\le n/2$ then $\Omega(t\log t)=\Omega((n/4)\log(n/4))$,
		so we can derive the same bound simply
		by considering the first $n/4$ intercalates in $\mathbf{L}[\{i,j\},[n]]$.
	\end{proof}

\begin{thm}
  For $2 \leq k \leq n$, 
  \[
  \mathbb{E}_2(k,n)=\frac12\binom{k}{2}(1+o(1)).
  \]
\end{thm}

\begin{proof}
  Since~\cite[Corollary $1.7$]{subsqrandom} proves the claim when
  $k\leq (1/2-o(1))n$, we may assume that $k = \Theta(n)$, which means
  that arguments that have previously been used for counting intercalates in
  Latin squares apply with only minor changes. Let $k'$ be a
  integer function of $n$ with $0\leq k'\leq k$. For a $k' \times n$
  Latin rectangle $T$ let $\mathcal{R}(T)$ be the set of $k \times n$
  Latin rectangles $L$ such that $L[[k'], [n]] = T$.
  By~\cite[Proposition~$4$]{manysubsq}, if $T$ and $T'$ are $k'\times n$
  Latin rectangles, then
  \begin{equation}\label{e:ext}
    \frac{|\mathcal{R}(T)|}{|\mathcal{R}(T')|} \leq \exp(O(n\log^2n)).
  \end{equation}
  
  Let $\mathbf{N}$ denote the number of intercalates in a random $k
  \times n$ Latin rectangle. Modifying the proof
  of~\cite[Theorem $4$]{KS}, by using \lref{l:intercbound}
  instead of~\cite[Theorem $3$]{KS} and using \eref{e:ext}, gives
  \begin{equation}\label{e:ub}
    \Pr(\mathbf{N} \leq (1-\delta)k^2/4) \leq \exp(-\Omega(n^{1/2} \log^{-1} n)),
  \end{equation}
  for fixed $\delta \in (0, 1]$. Similarly, a simple adaptation of the proof
    of~\cite[Theorem $2.1$]{KSSS} using \eref{e:ext} gives
    \begin{equation}\label{e:lb}
      \Pr(\mathbf{N} \geq (1+\delta)k^2/4) \leq \exp(-\Omega(n^{4/3}\log n)),
    \end{equation}
    for fixed $\delta > 0$.
    Combining \eref{e:ub} and \eref{e:lb} proves the theorem.
\end{proof}
	
\printbibliography
	
\end{document}